\documentclass[a4paper,oneside,12pt]{amsart}
\usepackage[english]{babel}
\usepackage{amsfonts,amsmath,amsthm,amssymb,latexsym,mathrsfs}

\setlength{\voffset}{-20mm}

\setlength{\hoffset}{-22mm}

\setlength{\textwidth}{175mm}

\setlength{\textheight}{250mm}

\newtheorem{theorem}{Theorem}[section]
\newtheorem{corollary}[theorem]{Corollary}
\newtheorem{lemma}[theorem]{Lemma}
\newtheorem{proposition}[theorem]{Proposition}

\theoremstyle{plain}

\newtheorem{remark}[theorem]{Remark}


\tolerance=3000
\sloppy

\begin{document}

\title[Extension of bounded Baire-one functions]{Extension of bounded Baire-one functions \\vs\\ Extension of unbounded Baire-one functions}
\author{Olena Karlova${}^1$}

\address{${}^1$Department of Mathematics and Informatics, Chernivtsi National University, Ukraine}
\email{maslenizza.ua@gmail.com}

\author{Volodymyr Mykhaylyuk${}^{1,2}$}
\address{${}^2$Jan Kochanowski University in Kielce, Poland}
\email{vmykhaylyuk@ukr.net}

\subjclass[2010]{Primary 54C20, 26A21; Secondary 54C30, 54C50}
\keywords{extension,  Baire-one function, ${\rm B}_1$-embedded set, ${\rm B}_1^*$-embedded set}

\begin{abstract}
  We compare possibilities of extension of bounded and unbounded Baire-functions from subspaces of topological spaces.
\end{abstract}
\maketitle

\section{Introduction}

Let $X$   be a topological space. A function $f:X\to \mathbb R$ belongs \emph{to the first Baire class}, if it is a pointwise limit of a sequence of real-valued continuous functions on $X$. We will denote by ${\rm B}_1(X)$ and ${\rm B}_1^*(X)$ the collections of all Baire-one and bounded Baire-one functions on $X$, respectively.

A subset $E$ of $X$ is \emph{${\rm B}_1$-embedded} (\emph{${\rm B}_1^*$-embedded}) in $X$, if every (bounded) function  $f\in {\rm B}_1(E)$ can be extended to a function $g\in {\rm B}_1(X)$. We will say that a space $X$ has the \emph{property} (${\rm B}_1^*={\rm B}_1$) if every ${\rm B}_1^*$-embedded subset of $X$ is ${\rm B}_1$-embedded in $X$.

Characterizations of ${\rm B}_1$- and ${\rm B}_1^*$-embedded subsets of topological spaces were obtained in~\cite{Ka} and \cite{Karlova:CMUC:2013}.

This short note is devoted to interesting problem: to find topological spaces with the property (${\rm B}_1^*={\rm B}_1$).

In the second section of this note we extend results from \cite[Section 6]{Karlova:CMUC:2013}  and show that  every hereditarily Lindel\"{o}ff hereditarily Baire space $X$ which hereditarily has a $\sigma$-discrete $\pi$-base has the property (${\rm B}_1^*={\rm B}_1$). In Section~\ref{B1neB1star} we show that any countable completely regular  hereditarily irresolvable space $X$ without isolated points is ${\rm B}_1^*$-embedded and is not ${\rm B}_1$-embedded in $\beta X$.

\section{Spaces with the property (${\rm B}_1^*={\rm B}_1$)}

Recall that a set $A$ in a topological space $X$ is \emph{functionally $G_\delta$ (functionally $F_\sigma$)}, if $A$ is an intersection (a union) of  a sequence of functionally open (functionally closed) subsets of $X$. We say that a subsets $A$ of a topological space $X$ is \emph{functionally ambiguous} if $A$ is functionally $F_\sigma$ and functionally $G_\delta$ simultaneously.

\begin{lemma}\label{lem:resolvability-Borel}
  Let $X$ be a completely regular topological space of the first category with a $\sigma$-discrete $\pi$-base. Then there exist  disjoint functionally ambiguous sets $A$ and $B$ such that
  $$
  X=A\cup B=\overline{A}=\overline{B}.
  $$
\end{lemma}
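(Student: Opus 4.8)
The plan is to construct $A$ as a functionally $F_\sigma$ set which is dense and codense and for which $B:=X\setminus A$ is again functionally $F_\sigma$; since the complement of a functionally $F_\sigma$ set is functionally $G_\delta$, this at once makes both $A$ and $B$ functionally ambiguous, while $\overline A=\overline B=X$ means exactly that $A$ and $B$ are dense. Throughout I would work inside the algebra generated by the cozero sets, using that cozero sets and zero sets are functionally ambiguous, that the functionally ambiguous sets are closed under complements and finite unions, and that the intersection of a zero set with a cozero set is functionally $F_\sigma$. First I would pass to a convenient $\pi$-base: since $X$ is completely regular, inside each member of the given $\sigma$-discrete $\pi$-base $\mathcal B=\bigcup_n\mathcal B_n$ there is a nonempty cozero subset, and the resulting family is again a $\pi$-base and again $\sigma$-discrete, because a family refining a discrete family is discrete. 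I also note that $X$ has no isolated points, since an isolated point would be a nonempty open subset of some nowhere dense set. The first genuine step is the sublemma: \emph{every dense open subset $O$ of $X$ contains a dense cozero subset}. Indeed, for each $n$ and each $U\in\mathcal B_n$ choose a nonempty cozero set $V_U\subseteq U\cap O$; as $\mathcal B_n$ is discrete, $G_n:=\bigcup_{U\in\mathcal B_n}V_U$ is cozero (a discrete union of cozero sets is cozero), and $\bigcup_nG_n$ is a dense cozero subset of $O$, density being immediate from the fact that $\mathcal B$ is a $\pi$-base.

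The second step upgrades ``first category'' to a functional version of it. Write $X=\bigcup_mF_m$ with $F_m$ closed, nowhere dense and increasing; then $X\setminus F_m$ is dense open, hence contains a dense cozero set, and intersecting the first $m$ of these I get decreasing dense cozero sets $C_m$ with $\bigcap_mC_m=\emptyset$. Put $Z_m:=X\setminus C_m$, so the $Z_m$ are nowhere dense zero sets, increasing, with $\bigcup_mZ_m=X$. The sets $E_m:=Z_m\setminus Z_{m-1}=Z_m\cap C_{m-1}$ (with $Z_0=\emptyset$) then partition $X$ into functionally $F_\sigma$ sets — each $E_m$ is in fact functionally ambiguous, being a Boolean combination of a zero set and a cozero set — and every nonempty open subset of $X$ meets infinitely many of them, since otherwise such a set would be contained in some $Z_N$, which is nowhere dense. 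If the $E_m$ could be distributed between $A$ and $B$ so that both resulting unions are dense, the proof would be finished: $A$ and $B$ would each be a countable union of functionally ambiguous sets, hence functionally $F_\sigma$, and being complementary they would then be functionally $G_\delta$ as well.

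The remaining and decisive difficulty is exactly this distribution, and it is the step I expect to be the main obstacle: the crude split $A:=\bigcup\{E_m:m\text{ odd}\}$, $B:=\bigcup\{E_m:m\text{ even}\}$ need not give two dense sets, since a nonempty open set might meet only even-indexed, or only odd-indexed, shells. To get around this I would refine the tower $(Z_m)$ so that every nonempty open set meets \emph{cofinitely} many of the shells $E_m$ — this is precisely an assertion that $X$ is functionally resolvable, and it is here that the $\sigma$-discrete $\pi$-base is really used. The intended refinement is to interleave into the tower, at each level, a ``sliver'' $S_m$: a nowhere dense functionally ambiguous set disjoint from $Z_m$ but dense in $Z_m$, replacing $Z_{m+1}$ by $Z_{m+1}\cup Z_m\cup S_m$. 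Then $E_{m+1}\supseteq S_m$ is dense in $Z_m$, and it follows by induction on $m$ that once a nonempty open set meets $Z_m$ it meets every later shell; hence it meets all but finitely many shells, and the odd/even split now yields dense $A$ and $B$, after which one concludes as above. The technical heart, and the place where the full strength of the hypothesis is needed, is the construction of the slivers: given a nowhere dense functionally ambiguous set $Z$, one must fill the dense cozero set $X\setminus\overline Z$ — and, more delicately, the portion of $X\setminus Z$ abutting $Z$ — with a sparse functionally ambiguous set that accumulates on all of $Z$, carrying out this placement one discrete layer of a ($Z$-adapted) $\sigma$-discrete $\pi$-base at a time so that the union stays functionally ambiguous and nowhere dense.
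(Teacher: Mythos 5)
Your set-up is sound and matches the paper's: reduce to a partition of $X$ into nowhere dense functionally ambiguous ``shells'' coming from the first-category decomposition (your $E_m$ are the paper's $X_n$), note that everything then hinges on two-coloring the shells so that both color classes are dense, and observe that the naive odd/even split fails. The final bookkeeping (countable unions of functionally ambiguous sets are functionally $F_\sigma$, complementary functionally $F_\sigma$ sets are functionally ambiguous) is also correct. But the decisive step --- the existence of the ``slivers'' $S_m$: nowhere dense, functionally ambiguous, disjoint from $Z_m$, with $Z_m\subseteq\overline{S_m}$ --- is exactly where you stop, and it is a genuine gap, not a routine technicality. The difficulty is that a $\sigma$-discrete $\pi$-base lets you place a small piece inside \emph{every member of the base} (which produces a set dense in $X$), but it gives you no control over the neighborhood filter of a prescribed point $z\in Z_m$: an open $U\ni z$ contains some basic $V$, but that $V$ need not meet $Z_m$ or have $z$ in its closure, so selecting pieces only inside basic sets that ``see'' $Z_m$ does not make $S_m$ accumulate at $z$. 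Requiring a \emph{nowhere dense} set whose closure contains a given nowhere dense set is a strong local density condition, and nothing in your sketch (nor the phrase ``$Z$-adapted $\pi$-base'') explains how to achieve it; as written, the proof is not complete.

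The paper avoids ever needing such slivers. It never asks a later shell to be dense in an earlier one; instead it builds $A=\bigcup_k A_k$ and $B=\bigcup_k B_k$ so that, at stage $k$, both $A_k$ and $B_k$ meet \emph{every member} $V$ of the $k$-th discrete family $\mathscr V_k$. This is done in two ways: the blocks of shells allotted to $A_k$ and $B_k$ are taken long enough to reach $\bigcup\mathscr V_k$, and for each $V\in\mathscr V_k$ that is still missed one inserts a pre-fabricated pair of disjoint nonempty nowhere dense functionally closed sets $A_{V,k},B_{V,k}\subseteq V$ (built from boundaries of two open sets with disjoint closures inside $V$, sandwiched by cozero sets); the union of these over the discrete family $\mathscr V_k$ is still functionally closed and nowhere dense. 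Since every nonempty open set contains a member of some $\mathscr V_k$, both $A$ and $B$ are dense, with no accumulation-on-$Z_m$ requirement ever arising. I recommend you replace the sliver step by this ``hit each basic open set once, at its own stage'' device; the rest of your argument then goes through.
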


\begin{proof}
  We fix  a $\pi$-base $\mathscr V=(\mathscr V_n:n\in\omega)$ of $X$, where  each family $\mathscr V_n$ is discrete and consists of functionally open sets in $X$. Denote $V_n=\bigcup\{V: V\in\mathscr V_n\}$ for all $n\in\omega$.

Let us observe that every open set $G\subseteq X$ contains functionally open subset  $U$ such that $U\subseteq G\subseteq \overline{U}$. Indeed, for every $n\in\omega$ we put $U_n=\cup\{V\in\mathscr V_n: V\subseteq G\}$ and $U=\bigcup_{n\in\omega} U_n$. Then each $U_n$ is functionally open as a union of a discrete family of functionally open sets. Hence, $U$ is functionally open. It is easy to see that $U$ is dense in $G$.

Keeping in mind the previous fact, we may assume that  there exists a covering $(F_n:n\in\omega)$ of the space $X$ by nowhere dense functionally closed sets $F_n\subseteq X$. Let $X_0=F_0$ and $X_n=F_n\setminus \bigcup_{k<n}F_k$ for all $n\ge 1$. Then $(X_n:n\in\omega)$ is a partition of $X$ by nowhere dense functionally ambiguous sets $X_n$.

Fix $n\in\omega$ and $V\in\mathscr V_n$. Since $X$ is regular, we can choose two open sets $H_1$ and $H_2$ in $V$ such that $\overline{H_1}\cap\overline{H_2}=\emptyset$ and $\overline{H_i}\subseteq V$ for $i=1,2$. Let $G_i$ and $O_i$ be functionally open sets such that $G_i\subseteq H_i\subseteq\overline{G_i}$ and $O_i\subseteq X\setminus \overline{H_i}\subseteq \overline{O_i}$, $i=1,2$. We put $A_{V,n}=X\setminus (G_1\cup O_1)$ and $B_{V,n}=X\setminus (G_2\cup O_2)$ and obtain disjoint nowhere dense functionally closed subsets of $V$.

We put $m_0=0$ and choose numbers $n_1\ge 0$ and $m_1>n_1$ such that $X_{n_1}\cap V_1\ne\emptyset$ and $X_{m_1}\cap V_1\ne\emptyset$. Notice that
$A_1'=\bigcup_{n=0}^{n_1}X_n$ and $B_1'=\bigcup_{n=n_1+1}^{m_1}X_n$ are nowhere dense functionally ambiguous sets in $X$.
Now we consider a set $\mathscr W_1=\{V\in\mathscr V_1: V\cap (A_1'\cup B_1')=\emptyset\}$ and observe that the sets
$A_1''=\cup\{A_{V,1}:V\in\mathscr W_1\}$ and $B_1''=\cup\{B_{V,1}:V\in\mathscr W_1\}$ are functionally closed and nowhere dense in $X$. Let
$A_1=A_1'\cup A_1''$ and $B_1=B_1'\cup B_1''$.
Notice that $A_1$ and $B_1$ are functionally ambiguous nowhere dense disjoint subsets of $X$.

Since $\overline{X\setminus (A_1\cup B_1)}=X$, there exists a number $n_2>m_1$ such that $(X_{n_2}\setminus (A_1\cup B_1))\cap V_2\ne\emptyset$. We put
$A_2'=\bigcup_{n=m_1+1}^{n_2} (X_n\setminus (A_1\cup B_1))$. Moreover, there exists $m_2>n_2$ such that $(X_{m_2}\setminus (A_1\cup B_1))\cap V_2\ne\emptyset$. Let $B_2'=\bigcup_{n=n_2+1}^{m_2} (X_n\setminus (A_1\cup B_1))$. We put $\mathscr W_2=\{V\in\mathscr V_2: V\cap (A_2'\cup B_2')=\emptyset\}$ and observe that the sets $A_2''=\{A_{V,2}:V\in\mathscr W_2\}$ and $B_2''=\{b_{V,2}:V\in\mathscr W_2\}$ are functionally closed and nowhere dense in $X$. We denote
$A_2=A_2'\cup A_2''$ and $B_2=B_2'\cup B_2''$. Then $A_2$ and $B_2$ are functionally ambiguous nowhere dense disjoint subsets of $X$.

Proceeding this process inductively we obtain  sequences $(A_k)_{k=1}^\infty$ and $(B_k)_{k=1}^\infty$ of functionally ambiguous sets such that $A_k\cap V\ne\emptyset\ne B_k\cap V$, $A_k\cap B_k=\emptyset$  or all $k\in\mathbb N$ and $V\in \mathscr V_k$. It remains to put $A=\bigcup_{k=1}^\infty A_k$, $B=\bigcup_{k=1}^\infty B_k$ and observe that $A\cup B=X$.
\end{proof}

In addition, note that Borel resolvability of topological spaces was also studied in~\cite{Ced, JM}.

We say that a topological space $X$ \emph{hereditarily has a $\sigma$-discrete $\pi$-base} if every its closed subspace has a $\sigma$-discrete $\pi$-base. It is easy to see that if a space $X$ hereditarily has a $\sigma$-discrete $\pi$-base, then each subspace of $X$ has a $\sigma$-discrete $\pi$-base.

\begin{lemma}\label{lem:hB-sub}
  Let $X$ be a normal  space such that $X$ hereditarily has a $\sigma$-discrete $\pi$-base.  If $X$ is a  ${\rm B}_1^*$-embedded subset of a hereditarily Baire space $Y$, then $X$ is hereditarily Baire.
\end{lemma}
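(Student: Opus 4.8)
The plan is to prove the contrapositive. Suppose $X$ is not hereditarily Baire, so some nonempty closed $F\subseteq X$ fails to be a Baire space; then $F$ contains a nonempty relatively open subset that is of the first category in $F$, and that subset is of the first category in itself. Replacing $F$ by the closure in $X$ of such a subset, we obtain a nonempty closed set $Z\subseteq X$ which is of the first category in itself. As a subspace of $X$, $Z$ is completely regular, and, being a closed subspace of $X$, it has a $\sigma$-discrete $\pi$-base. Hence Lemma~\ref{lem:resolvability-Borel} applies to $Z$ and yields disjoint functionally ambiguous (in $Z$) sets $A$ and $B$ with $Z=A\cup B=\overline{A}=\overline{B}$, the closures being taken in $Z$; in particular $A$ and $B$ partition $Z$ and are both dense in it.

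Next I would transport the characteristic function $\chi_A$. Since $A$ and $B=Z\setminus A$ are both functionally $F_\sigma$ in $Z$ — write $A=\bigcup_nC_n$ and $B=\bigcup_nD_n$ with $C_n,D_n$ increasing functionally closed subsets of $Z$, so that $C_n\cap D_n=\emptyset$ — one may choose continuous $f_n\colon Z\to[0,1]$ with $f_n\equiv 1$ on $C_n$ and $f_n\equiv 0$ on $D_n$, and then $f_n\to\chi_A$ pointwise on $Z$; thus $\chi_A\in{\rm B}_1^*(Z)$. The crucial step, and the one place where normality of $X$ enters, is to extend $\chi_A$ to a bounded Baire-one function on $X$, i.e.\ to know that the closed set $Z$ is ${\rm B}_1^*$-embedded in $X$: this rests on Tietze's theorem (used to push the functionally closed data of $A$ and $B$ from $Z$ out to $X$ and to make the traces disjoint off $Z$) together with the characterizations of ${\rm B}_1^*$-embedded sets from \cite{Ka,Karlova:CMUC:2013}. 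Having such an extension $\tilde f\in{\rm B}_1^*(X)$ of $\chi_A$ (truncate to $[0,1]$ if necessary) and using that $X$ is ${\rm B}_1^*$-embedded in $Y$, we get $g\in{\rm B}_1(Y)$ with $g|_Z=\chi_A$.

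Finally I would derive a contradiction on $Z':=\overline{Z}$, the closure of $Z$ in $Y$. Since $Z'$ is a closed subspace of the hereditarily Baire space $Y$, it is a nonempty Baire space, and $g|_{Z'}$ is a Baire-one function on it; by Baire's theorem on pointwise limits of continuous functions over a Baire space, $g|_{Z'}$ has a point of continuity $z_0$. But $A$ and $B$ are dense in $Z$, hence dense in $Z'$, while $g\equiv 1$ on $A$ and $g\equiv 0$ on $B$, so every neighbourhood of $z_0$ in $Z'$ meets both $g^{-1}(1)$ and $g^{-1}(0)$; thus the oscillation of $g|_{Z'}$ at $z_0$ is at least $1$, contradicting continuity there. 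Hence $X$ is hereditarily Baire. I expect the extension step in the second paragraph to be the real obstacle: the reduction in the first paragraph and the oscillation argument in the third are routine, but the naive attempt to extend the $f_n$ one at a time fails (enough control to force convergence on all of $X$ would force uniform convergence, hence a continuous extension, which is impossible), so one genuinely has to work through the structure of ${\rm B}_1^*$-embeddings.
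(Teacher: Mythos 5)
Your overall architecture matches the paper's: reduce to a nonempty closed first-category subset, resolve it by Lemma~\ref{lem:resolvability-Borel} into two disjoint dense functionally ambiguous pieces, push the resulting characteristic function out to $Y$ via the ${\rm B}_1^*$-embedding, and contradict the Baireness of the closure in $Y$. Your first-paragraph reduction is fine (if anything, more careful than the paper's), and your point-of-continuity argument in the third paragraph is an equivalent form of the paper's contradiction (two disjoint dense $G_\delta$-sets in a Baire space). But the step you yourself flag as ``the real obstacle'' is a genuine gap: you never actually produce a bounded Baire-one function on $X$ that is $1$ on $A$ and $0$ on $B$. Asserting that the closed set $Z$ is ${\rm B}_1^*$-embedded in $X$ ``by Tietze plus the characterizations'' is both unproved and stronger than what is needed; nothing in the hypotheses hands you ${\rm B}_1^*$-embeddedness of closed subsets of normal spaces, and as you note, extending the approximating functions $f_n$ one at a time cannot work.

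The paper closes this gap by extending the \emph{sets} rather than the function, and this is exactly where normality enters. Since $Z$ is closed in the normal space $X$, it is $z$-embedded in $X$; by \cite[Proposition 4.3]{Karlova:CMUC:2013}, the disjoint functionally ambiguous sets $A$ and $B$ of $Z$ extend to disjoint functionally ambiguous sets $\widetilde A$ and $\widetilde B$ of $X$ with $\widetilde A\cap Z=A$ and $\widetilde B\cap Z=B$. The characteristic function $\chi_{\widetilde A}:X\to[0,1]$ is then a bounded Baire-one function on $X$ --- by precisely the $f_n$-construction you wrote down for $\chi_A$ on $Z$, applied to the functionally $F_\sigma$ sets $\widetilde A$ and $X\setminus\widetilde A$; note that for this one only needs $\widetilde A$ to be functionally ambiguous in $X$, not any density or partition property --- and it equals $1$ on $A$ and $0$ on $B$, which is all the final step requires. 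With that function in hand, the rest of your argument goes through verbatim. So the fix is not to work through ``the structure of ${\rm B}_1^*$-embeddings'' of $Z$ in $X$ at all, but to invoke $z$-embeddedness of closed subsets of normal spaces and extend the pair $(A,B)$ as sets.
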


\begin{proof}
  Assume that $X$ is not hereditarily Baire and find a closed subset $F\subseteq X$ of the first category. According to Lemma~\ref{lem:resolvability-Borel}, there exist disjoint functionally ambiguous subsets $A$ and $B$ in $F$ such that $F=A\cup B=\overline{A}=\overline{B}$. Since $F$ is a closed subset of a normal space, $F$ is $z$-embedded in $X$. Therefore, there are two functionally ambiguous disjoint sets $\widetilde A$ and $\widetilde B$ in $X$ such that $\widetilde A\cap F=A$ and $\widetilde B\cap F=B$ (see \cite[Proposition 4.3]{Karlova:CMUC:2013}). Let us observe that   the characteristic function $\chi:X\to [0,1]$ of the set $\widetilde A$ belongs to the first Baire class. Then there exists an extension $f\in{\rm B}_1(Y)$ of $\chi$. The sets $f^{-1}(0)$ and $f^{-1}(1)$ are disjoint $G_\delta$-sets which are dense in $\overline{X}$. We obtain a contradiction, because $\overline{X}$ is a Baire space as a closed subset of a hereditarily Baire space.
\end{proof}

\begin{remark}
  {\rm {\it There exist a metrizable separable Baire space $X$ and its ${\rm B}_1^*$-embedded subspace $E$ which is not a Baire space.} Let  $X=({\mathbb Q}\times \{0\})\cup ({\mathbb R}\times (0,1])$ and  $E={\mathbb Q}\times \{0\}$. Then $E$ is closed in $X$. Therefore, any $F_\sigma$- and $G_\delta$-subset $C$ of $E$ is also $F_\sigma$- and $G_\delta$- in $X$. Hence, $E$ is ${\rm B}_1^*$-embedded  in $X$.}
\end{remark}

\begin{theorem}\label{B1=B1star}
 Let $Y$ be a hereditarily Baire completely regular space and $X\subseteq Y$ be a Lindel\"{o}ff space which hereditarily has a $\sigma$-discrete $\pi$-base. The following are equal:
 \begin{enumerate}
   \item $X$ is ${\rm B}_1^*$-embedded in $Y$;

   \item $X$ is ${\rm B}_1$-embedded in $Y$.
 \end{enumerate}
\end{theorem}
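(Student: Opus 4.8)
The implication $(2)\Rightarrow(1)$ is trivial, so the content is $(1)\Rightarrow(2)$. The plan is, in order: first to secure the hereditary Baire property of $X$; then to reduce the extension of an arbitrary (possibly unbounded) function in ${\rm B}_1(X)$ to a separation problem for bounded functions; and finally to solve that separation problem using the hereditary Baire property together with the Lindel\"{o}f property.

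For the first step, note that $X$, being a subspace of the completely regular space $Y$, is completely regular, hence regular; being moreover Lindel\"{o}f, it is normal. Since $X$ also hereditarily has a $\sigma$-discrete $\pi$-base, Lemma~\ref{lem:hB-sub} applies to the pair $X\subseteq Y$ and yields that $X$ is hereditarily Baire. This is the only place the $\pi$-base hypothesis is used, and it enters only through Lemma~\ref{lem:resolvability-Borel}.

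For the reduction, fix $f\in{\rm B}_1(X)$ and put $g=f/(1+|f|)$, so that $g\in{\rm B}_1^*(X)$, $g(X)\subseteq(-1,1)$, and $f=g/(1-|g|)$. By $(1)$ there is $h_0\in{\rm B}_1(Y)$ with $h_0|_X=g$; replacing $h_0$ by its truncation $\min\{1,\max\{-1,h_0\}\}$ we may assume $-1\le h_0\le 1$ on $Y$, still with $h_0|_X=g$. Since $h_0^{-1}((-1,1))$ is functionally $F_\sigma$ in $Y$ (a preimage of an open set under a function of the first Baire class), the set $Z:=Y\setminus h_0^{-1}((-1,1))=h_0^{-1}(\{-1,1\})$ is functionally $G_\delta$ in $Y$, and it is disjoint from $X$ because $|g|<1$. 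Suppose now we can find $\nu\in{\rm B}_1(Y)$ with $0\le\nu\le1$, $\nu|_X\equiv1$ and $\nu|_Z\equiv0$. Then $h:=h_0\nu\in{\rm B}_1(Y)$ satisfies $|h|<1$ everywhere on $Y$ (on $Z$ it vanishes, while off $Z$ one has $|h|\le|h_0|<1$), and $h|_X=g$; consequently $\widetilde f:=h/(1-|h|)$ is a well-defined function in ${\rm B}_1(Y)$ extending $f$. Thus $(1)\Rightarrow(2)$ is reduced to: \emph{every functionally $G_\delta$ subset $Z$ of $Y$ disjoint from $X$ is separated from $X$ by a function of the first Baire class on $Y$.}

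This last point is the heart of the matter, and the place where the Lindel\"{o}f property of $X$ and the hereditary Baire property of both $X$ and $Y$ must all be used, in conjunction with the characterizations of ${\rm B}_1^*$- and ${\rm B}_1$-embedded sets from \cite{Ka, Karlova:CMUC:2013}. Writing $Z=\bigcap_m V_m$ with $V_m$ functionally open in $Y$ and decreasing, the traces $X\setminus V_m$ form an increasing cover of $X$ by functionally closed subsets of $X$. One natural route is to exhaust $X$ by a transfinite decreasing sequence of closed subspaces $F_0=X\supseteq F_1\supseteq\cdots$, where at stage $\alpha$ one removes from $F_\alpha$ the relatively open set $\bigcup_m\mathrm{int}_{F_\alpha}(F_\alpha\setminus V_m)$, which is \emph{dense} in $F_\alpha$ by the Baire property applied to the closed — hence Baire — subspace $F_\alpha$, and on which the obvious finite-stage separations are at hand; since $X$ is Lindel\"{o}f this process stops after countably many steps, and since each $F_\alpha$ is Baire it cannot stabilize at a nonempty set, so the pieces produced exhaust $X$. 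The delicate part — which I expect to be the main obstacle — is then to patch the countably many stagewise data into a single $\nu\in{\rm B}_1(Y)$ that equals $1$ on \emph{all} of $X$ and $0$ on \emph{all} of $Z$: this requires keeping the assembled object of the first Baire class while prescribing its values exactly on $Z$ and on $X$, and it is precisely here that hereditary Baire-ness of $Y$ is invoked — through the fact that $\overline X$ is a Baire space — to prevent the residual set $\overline X\setminus X\supseteq Z\cap\overline X$ from being large enough to obstruct the patching. The characterization of ${\rm B}_1$-embedded sets from \cite{Karlova:CMUC:2013} then packages this into the desired conclusion that $X$ is ${\rm B}_1$-embedded in $Y$.
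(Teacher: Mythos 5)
Your first step coincides exactly with the paper's: $X$ is normal (regular Lindel\"{o}f) and hereditarily has a $\sigma$-discrete $\pi$-base, so Lemma~\ref{lem:hB-sub} applies and gives that $X$ is hereditarily Baire. From that point on, however, the paper's proof is a one-liner: it invokes \cite[Theorem 13]{Ka}, which states that every Lindel\"{o}f hereditarily Baire subspace of a completely regular space is ${\rm B}_1$-embedded, and is done. (Note that in the paper the ${\rm B}_1^*$-embedding hypothesis is consumed entirely in establishing hereditary Baireness; it plays no further role.)

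Your version instead tries to prove that extension result from scratch, and this is where there is a genuine gap. The reduction of an unbounded $f$ to a bounded $g=f/(1+|f|)$ and then to the separation problem --- find $\nu\in{\rm B}_1(Y)$ with $\nu\equiv 1$ on $X$ and $\nu\equiv 0$ on the functionally $G_\delta$ set $Z=h_0^{-1}(\{-1,1\})$ --- is a correct and standard move (it is essentially how Kalenda and Spurn\'{y} characterize ${\rm B}_1$-embeddedness in terms of ${\rm B}_1^*$-embeddedness plus a separation property). But the separation problem itself is the entire content of \cite[Theorem 13]{Ka}, and you do not solve it: the transfinite exhaustion of $X$ by closed subspaces is only sketched, and the crucial step of assembling the stagewise data into a single Baire-one function on $Y$ taking the prescribed values on all of $X$ and all of $Z$ is explicitly left open (``the main obstacle''). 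Two further points are off. First, the claim that hereditary Baireness of $Y$ enters the patching ``through the fact that $\overline{X}$ is a Baire space'' misplaces the hypothesis: that argument is how Lemma~\ref{lem:hB-sub} is proved, whereas the separation/extension step in \cite{Ka} needs only complete regularity of $Y$ together with the Lindel\"{o}f and hereditary Baire properties of $X$. Second, the termination of the transfinite process after countably many steps and, more importantly, the non-stabilization at a nonempty set, are asserted rather than argued. As written, the proposal reduces the theorem to a statement that still has to be proved (or cited); citing \cite[Theorem 13]{Ka} at that point would close the gap and essentially recover the paper's proof.
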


\begin{proof}
  We need only  to show 1) $\Rightarrow$ 2). By Lemma~\ref{lem:hB-sub}, $X$ is hereditarily Baire. Then $X$ is ${\rm B}_1$-embedded in $Y$ by \cite[Theorem 13]{Ka}.
\end{proof}

\begin{corollary}
  Every hereditarily Lindel\"{o}ff hereditarily Baire space $X$ which hereditarily has a $\sigma$-discrete $\pi$-base has the property (${\rm B}_1^*={\rm B}_1$).
\end{corollary}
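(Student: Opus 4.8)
The plan is to deduce the corollary directly from Theorem~\ref{B1=B1star} by taking the ambient space $Y$ of that theorem to be $X$ itself. So let $E$ be a ${\rm B}_1^*$-embedded subset of $X$; the goal is to show that $E$ is in fact ${\rm B}_1$-embedded in $X$, which is precisely what the property (${\rm B}_1^*={\rm B}_1$) asserts. First I would record that $X$, being hereditarily Baire, is in particular a Baire, hence hereditarily Baire, space, and (by the standing assumption that our spaces are completely regular, or, if one only assumes regularity, because a hereditarily Lindel\"off regular space is perfectly normal and hence Tychonoff) $X$ is completely regular. Thus $X$ is an admissible choice for the space $Y$ in Theorem~\ref{B1=B1star}. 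Since $X$ is hereditarily Lindel\"off, the subspace $E$ is Lindel\"off, which supplies the other structural hypothesis on the subset in that theorem.

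The single point that genuinely needs a word of justification is that $E$ itself hereditarily has a $\sigma$-discrete $\pi$-base. By definition this means that every closed subspace of $E$ admits a $\sigma$-discrete $\pi$-base. But a closed subspace of $E$ is in particular a subspace of $X$, and, by the remark stated just before Lemma~\ref{lem:hB-sub}, every subspace of a space that hereditarily has a $\sigma$-discrete $\pi$-base again has a $\sigma$-discrete $\pi$-base. Hence every closed subspace of $E$ has a $\sigma$-discrete $\pi$-base, i.e.\ $E$ hereditarily has a $\sigma$-discrete $\pi$-base. This is the only place where the \emph{hereditary} nature of the hypothesis on $X$ is used in the reduction.

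With all hypotheses of Theorem~\ref{B1=B1star} verified for the pair $E\subseteq X$, that theorem yields the equivalence of ``$E$ is ${\rm B}_1^*$-embedded in $X$'' and ``$E$ is ${\rm B}_1$-embedded in $X$''; since $E$ was assumed ${\rm B}_1^*$-embedded, it is ${\rm B}_1$-embedded in $X$, and the corollary follows. I do not anticipate any real obstacle in this last step: all of the substantive work has already been carried out, being concentrated in Lemma~\ref{lem:resolvability-Borel} (Borel resolvability under a $\sigma$-discrete $\pi$-base and first category), Lemma~\ref{lem:hB-sub} (${\rm B}_1^*$-embedded subsets of hereditarily Baire spaces are hereditarily Baire), and the cited characterization \cite[Theorem~13]{Ka} of ${\rm B}_1$-embeddability for hereditarily Baire subspaces, all of which feed into Theorem~\ref{B1=B1star}. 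The proof of the corollary is therefore essentially bookkeeping: choosing $Y=X$, checking that hereditary Lindel\"ofness and the hereditary $\sigma$-discrete $\pi$-base pass to the subspace $E$, and invoking the theorem.
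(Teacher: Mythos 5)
Your proposal is correct and coincides with the paper's (implicit) argument: the corollary is stated without proof precisely because it is the specialization $Y=X$ of Theorem~\ref{B1=B1star}, with the hereditary hypotheses on $X$ passing to the subset $E$ exactly as you verify. The only point worth noting is that you rightly flag the complete regularity of $X$, which the corollary's statement leaves tacit but which the theorem requires.
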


\section{Spaces without the property (${\rm B}_1^*={\rm B}_1$)}\label{B1neB1star}

  A subset $A$ of a topological space $X$ is called {\it (functionally) resolvable in the sense of Hausdorff} or {\it (functionally) $H$-set} if
  \begin{gather*}\label{gath:resolvable_set}
  A=(F_1\setminus F_2)\cup (F_3\setminus F_4)\cup\dots\cup(F_\xi\setminus F_{\xi+1})\cup\dots,
  \end{gather*}
where  $(F_\xi)_{\xi<\alpha}$ is a decreasing chain of (functionally) closed sets in $X$.

It is well-known~\cite[\S 12.I]{Kur:Top:Rus1} that a set $A$ is an $H$-set if and only if for any closed nonempty set   $F\subseteq X$ there is a nonempty relatively open set  $U\subseteq F$ such that $U\subseteq A$ or $U\subseteq X\setminus A$.


A topological space without isolated points is called \emph{crowded}.

A topological space $X$ is \emph{irresolvable} if is not a union of two disjoint dense subsets and is \emph{hereditarily irresolvable} if every subspace of $X$ is irresolvable.

\begin{lemma}\label{prop:resofirres}
 Every subset of a hereditarily irresolvable space is an H-set.
\end{lemma}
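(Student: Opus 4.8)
The plan is to use the well-known characterization of $H$-sets quoted just above the lemma: a set $A\subseteq X$ is an $H$-set if and only if for every nonempty closed set $F\subseteq X$ there is a nonempty relatively open $U\subseteq F$ with $U\subseteq A$ or $U\subseteq X\setminus A$. So I would fix an arbitrary subset $A\subseteq X$, fix an arbitrary nonempty closed $F\subseteq X$, and aim to produce such a $U$.

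First I would pass to the subspace $F$ with its relative topology. Inside $F$, consider the relatively closed set $\overline{A\cap F}^{\,F}$, i.e.\ the closure of $A\cap F$ taken in $F$; call it $C$. Then $A\cap F$ is dense in $C$, and I claim $C\setminus A$ (equivalently $C\setminus(A\cap F)$) is \emph{not} dense in $C$. Indeed, if it were dense, then $C$ would be the union of the two disjoint dense subsets $A\cap F$ and $C\setminus A$, contradicting irresolvability of the subspace $C$ (which holds because $X$ is hereditarily irresolvable, so every subspace, in particular $C\subseteq F\subseteq X$, is irresolvable). Wait — I should be careful that $A\cap F$ is itself nonempty; if $A\cap F=\emptyset$ then $F\subseteq X\setminus A$ and we may simply take $U=F$, so that degenerate case is handled separately.

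Given that $A\cap F\neq\emptyset$ and $C\setminus A$ is not dense in $C$, there is a nonempty relatively open subset $W$ of $C$ that misses $C\setminus A$, hence $W\subseteq A$. Write $W = C\cap V$ for some open $V$ in $F$ (or in $X$ and intersect with $F$). Now I want an open-in-$F$ set contained in $A$, not just open-in-$C$; for this I use that $A\cap F$ is dense in $C$, so $W$ meets $A\cap F$, and more usefully $V\cap F$ is a nonempty open subset of $F$ whose trace on $C$ lies in $A$. The cleanest move: set $U = V\cap F$; since $A\cap F$ is dense in $C$ and $C$ is relatively closed in $F$, any point of $U$ outside $C$ would contradict $C = \overline{A\cap F}^{\,F}$ only if... hmm. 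Actually the tidy argument is: $U\setminus C$ is disjoint from $\overline{A\cap F}^{\,F}$ hence disjoint from $A\cap F$, while $U\cap C = W\subseteq A$. That does not immediately give $U\subseteq A$. So instead I should take $U$ to be a nonempty relatively open subset of $F$ on which one of the two alternatives holds outright: observe that $U\setminus C$ is a (possibly empty) open subset of $F$ disjoint from $A\cap F$, i.e.\ contained in $X\setminus A$, so if $U\setminus C\neq\emptyset$ take it as our witnessing open set (giving $U'\subseteq X\setminus A$); if $U\setminus C=\emptyset$ then $U=W\subseteq A$. Either way we have produced a nonempty relatively open subset of $F$ lying entirely in $A$ or entirely in $X\setminus A$, which is exactly the condition required.

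The main obstacle is the bookkeeping in the last step — reconciling ``relatively open in $C$'' with ``relatively open in $F$'' — and making sure the empty/degenerate cases ($A\cap F=\emptyset$, or $W$ already open in $F$, or $U\setminus C$ empty) are each dispatched cleanly so that in every case a genuine nonempty relatively open subset of $F$ with the one-sided containment is exhibited. Once the characterization from \cite[\S 12.I]{Kur:Top:Rus1} is invoked, the heart of the matter is just the single application of irresolvability to the subspace $\overline{A\cap F}^{\,F}$, which is immediate from the hereditary hypothesis; everything else is elementary point-set topology.
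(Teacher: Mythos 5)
Your proof is correct and rests on the same two ingredients as the paper's: the Kuratowski characterization of $H$-sets and a single application of hereditary irresolvability to a closed subspace. The paper argues by contraposition — if no admissible $U$ exists for some nonempty closed $F$, then $F\cap A$ and $F\setminus A$ are both dense in $F$, so $F=(F\cap A)\cup(F\setminus A)$ resolves $F$ — which dispenses with the passage to $C=\overline{A\cap F}^{\,F}$ and all of your open-in-$C$ versus open-in-$F$ bookkeeping.
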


\begin{proof} Assume that there is a closed nonempty set $F$ in a hereditarily irresolvable space $X$ and a set $A\subseteq X$ such that
 $\overline{F\cap A}\cap\overline{F\setminus A}=F$. Then $\overline{F\cap A}=\overline{F\setminus A}=F=(F\cap A)\cup (F\setminus A)$, which contradicts to irresolvability of  $F$.
\end{proof}

A function $f:X\to Y$ between a topological space $X$ and a metric space $(Y,d)$ is called \emph{fragmented} if for every $\varepsilon>0$ and for every closed nonempty set $F\subseteq X$ there exists a relatively open nonempty set $U\subseteq F$ such that ${\rm diam}f(U)<\varepsilon$.

\begin{proposition}\label{prop:boundfragm}
 Every bounded function $f:X\to \mathbb R$ on a hereditarily irresolvable space $X$ is fragmented.
\end{proposition}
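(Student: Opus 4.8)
The plan is to deduce fragmentability of $f$ from its boundedness together with the fact that, by Lemma~\ref{prop:resofirres}, every preimage $f^{-1}(I)$ is an $H$-set. Fix $\varepsilon>0$ and a nonempty closed set $F\subseteq X$; I must produce a nonempty relatively open $U\subseteq F$ with ${\rm diam}\,f(U)<\varepsilon$. Since $f$ is bounded, its range is covered by finitely many intervals $I_1,\dots,I_n$ of diameter less than $\varepsilon$. Put $A_j=f^{-1}(I_j)$, so that $F\subseteq A_1\cup\dots\cup A_n$; by Lemma~\ref{prop:resofirres} each $A_j$ is an $H$-set in $X$.

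Next I would peel off the pieces $A_1,A_2,\dots$ one at a time. Apply the characterization of $H$-sets recalled above to $A_1$ and the closed set $F$: there is a nonempty relatively open $U_1\subseteq F$ with $U_1\subseteq A_1$ or $U_1\subseteq X\setminus A_1$; in the first case $f(U_1)\subseteq I_1$ and we are done. In general, suppose we have a nonempty relatively open $U_k\subseteq F$ disjoint from $A_1\cup\dots\cup A_k$. Its closure $\overline{U_k}$ in $F$ coincides with its closure in $X$ (because $F$ is closed in $X$), so it is a nonempty closed subset of $X$, and the characterization applied to $A_{k+1}$ yields a nonempty set $V$, relatively open in $\overline{U_k}$, with $V\subseteq A_{k+1}$ or $V\subseteq X\setminus A_{k+1}$. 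The crucial move is to replace $V$ by $U_{k+1}:=V\cap U_k$: this set is relatively open in $F$, and it is nonempty because $U_k$ is dense in $\overline{U_k}$. If $V\subseteq A_{k+1}$, then $U_{k+1}\subseteq A_{k+1}=f^{-1}(I_{k+1})$, hence $f(U_{k+1})\subseteq I_{k+1}$ and we are done; otherwise $U_{k+1}$ is disjoint from $A_1\cup\dots\cup A_{k+1}$ and we pass to the next step.

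This procedure cannot survive past stage $n$: otherwise $U_n$ would be a nonempty subset of $F$ disjoint from $A_1\cup\dots\cup A_n\supseteq F$, which is impossible. Hence at some stage $k\le n$ it stops, delivering a nonempty relatively open $U:=U_k\subseteq F$ with $f(U)\subseteq I_k$, so that ${\rm diam}\,f(U)\le{\rm diam}\,I_k<\varepsilon$. As $\varepsilon>0$ and $F$ were arbitrary, $f$ is fragmented. I expect the only subtle point to be the intersection step $U_{k+1}=V\cap U_k$: passing to $\overline{U_k}$ is forced on us in order to invoke the $H$-set characterization, but the relatively open set it returns may meet the already discarded pieces $A_1,\dots,A_k$ along the frontier of $U_k$; intersecting back with $U_k$ --- legitimate precisely because $U_k$ is dense in $\overline{U_k}$ --- removes exactly that boundary behaviour.
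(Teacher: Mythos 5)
Your proof is correct and takes essentially the same approach as the paper's: cover the bounded range by finitely many sets of diameter less than $\varepsilon$, pull the pieces back to $F$, and use Lemma~\ref{prop:resofirres} to see that each preimage is an $H$-set. The paper reaches the contradiction by noting that each piece $f^{-1}(B_k)\cap F$ has empty interior in $F$, hence (being an $H$-set) is nowhere dense, so that $F$ would be a finite union of nowhere dense sets; your finite peeling induction via the relatively-open-set characterization of $H$-sets is just a direct unwinding of that same argument.
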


\begin{proof}
  To obtain a contradiction we assume that there exists a bounded function $f:X\to\mathbb R$ which is not fragmented. Then there is $\varepsilon>0$ and a closed nonempty set $F\subseteq X$ such that for every relatively open set $U\subseteq F$ we have ${\rm diam} f(U)\ge\varepsilon$.
  
  Since $f(X)$ is a compact set, we take a finite partition $\{B_1,\dots,B_n\}$ of $f(X)$ by sets with diameters $<\varepsilon$. Let $H_k=f^{-1}(B_k)\cap F$ for every $k\in\{1,\dots,n\}$. Then each $H_k$ have empty interior in $F$, because $f$ is not fragmented. By Lemma~\ref{prop:resofirres}, each $H_k$ is an $H$-set and, therefore, is nowhere dense in $F$. Hence, $\{H_1,\dots,H_n\}$ is a finite partition of $F$ by nowhere dense sets, which is impossible. 
\end{proof}

Recall that a subspace $E$ of a topological space $X$ is \emph{$z$-embedded in $X$}, if any functionally closed subset $F$ of $E$ can be extended to a functionally closed subset of $X$.

\begin{lemma}\label{prop:ext:restofsigma}
  Let $E$ be a $z$-embedded countable subspace of a topological space  $X$ and $A\subseteq E$ be a functionally H-set in  $E$. Then there exists a functionally H-set  $B\subseteq X$ such that $B$ is $F_\sigma$ and $B\cap E=A$.
\end{lemma}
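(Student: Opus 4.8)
The plan is to represent $A$ by a chain of functionally closed sets of \emph{countable length}, extend that chain to $X$, and let $B$ be the set this extended chain produces. By the definition of a functionally $H$-set there is a decreasing transfinite chain $(F_\xi)_{\xi<\alpha}$ of functionally closed subsets of $E$ with $A=(F_1\setminus F_2)\cup(F_3\setminus F_4)\cup\dots$. Since $E$ is countable, this chain is constant on the intervals between its (countably many) points of strict decrease, so a reindexing lets one assume that $\alpha$ is a countable ordinal. A little care is needed here: simply deleting repeated entries can shift the parities of the indices and thereby alter which differences occur in $A$, so one collapses each maximal constant block of the chain to a block of length one or two, or pads it with an $\omega$-block of repeats, keeping its contribution unchanged; this reindexing only ever replaces a member of the chain by an intersection of countably many of the original members, which stays functionally closed because a countable intersection of functionally closed sets is functionally closed (use $\sum_n 2^{-n}\min(f_n,1)$). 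Thus one may assume that $(F_\xi)_{\xi<\alpha}$ is decreasing and functionally closed in $E$ with $\alpha<\omega_1$.

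Next I would extend the chain. As $E$ is $z$-embedded in $X$, each $F_\xi$ extends to a functionally closed set $\widetilde F_\xi\subseteq X$ with $\widetilde F_\xi\cap E=F_\xi$. These need not be decreasing, but $\widehat F_\xi:=\bigcap_{\eta\le\xi}\widetilde F_\eta$ is: it is a countable intersection of functionally closed subsets of $X$ (as $\xi<\alpha$ is countable), hence functionally closed; it decreases in $\xi$; and $\widehat F_\xi\cap E=\bigcap_{\eta\le\xi}F_\eta=F_\xi$ because $(F_\eta)$ decreases. Let $B$ be the set produced by the chain $(\widehat F_\xi)_{\xi<\alpha}$ exactly as $A$ is produced by $(F_\xi)_{\xi<\alpha}$. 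Then $B$ is a functionally $H$-set in $X$ by definition; and since intersecting with $E$ commutes with unions and set differences, $B\cap E=A$.

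Finally, $B$ is functionally $F_\sigma$, in particular $F_\sigma$: since $\alpha$ is countable, $B$ is a countable union of sets of the form $\widehat F_\xi\setminus\widehat F_{\xi+1}=\widehat F_\xi\cap(X\setminus\widehat F_{\xi+1})$, each being the intersection of a functionally closed set with a functionally open set; a functionally open set is functionally $F_\sigma$, so each such intersection is functionally $F_\sigma$, and a countable union of functionally $F_\sigma$ sets is functionally $F_\sigma$.

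The one step that is not purely formal is the reduction, in the first paragraph, to a defining chain of countable length, and I expect that to be the main technical obstacle. The fact that makes everything work — and the only place where countability of $E$ is really used — is that a countable intersection of functionally closed sets is again functionally closed: this lets both the reindexing and the ``decreasing-ification'' of the extended chain stay inside the functionally closed sets, while the resulting countable length of the chain is exactly what forces $B$ to be $F_\sigma$.
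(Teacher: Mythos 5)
Your proof is correct and follows essentially the same route as the paper's: reduce to a countable defining chain, extend each level by $z$-embedding, force monotonicity by taking (countable, hence still functionally closed) intersections, and read off $B$ as the resulting difference set. You in fact supply details the paper only asserts --- the parity-preserving reindexing and the explicit ``decreasing-ification'' of the extended chain --- so nothing further is needed.
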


\begin{proof}
 We take a decreasing transfinite sequence  $(A_\xi:\xi<\alpha)$ of functionally closed subsets of   $E$ such that
  $A=\bigcup_{\xi<\alpha} (A_\xi\setminus A_{\xi+1})$ (every ordinal $\xi$ is odd). Since $|A|\le\aleph_0$, we may assume that   $|(A_\xi:\xi<\alpha)|\le \aleph_0$. The subspace $E$ is   $z$-embedded in $X$ and we choose a decreasing sequence  $(B_\xi:\xi<\alpha)$ of functionally closed sets in   $X$ such that $A_\xi=B_\xi\cap E$ for all $\xi<\alpha$. We put
  $$
  B=\bigcup_{\xi<\alpha,\,\xi {\footnotesize \mbox{\,\,is odd}}}(B_\xi\setminus B_{\xi+1}).
  $$
  Then $B$ is functionally   $F_\sigma$-set in  $X$ and $B\cap E=A$.
\end{proof}

\begin{lemma}\label{prop:resincomp}
 Let $X$ be a compact space and $B\subseteq X$ be functionally Borel measurable H-set. Then  $B$ is functionally ambiguous in $X$.
\end{lemma}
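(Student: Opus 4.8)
\noindent\emph{Proof proposal.} The plan is to push $B$ forward to a compact metrizable quotient of $X$, on which the Hausdorff derivative is automatically countable, and then pull the resulting decomposition back. Concretely, since $B$ is functionally Borel I would fix continuous functions $f_n\colon X\to[0,1]$, $n\in\omega$, so that $B$ lies in the $\sigma$-algebra generated by the cozero sets $\mathrm{coz}(f_n)$, and put $f=(f_n)_n\colon X\to K:=f(X)\subseteq[0,1]^{\omega}$. Then $K$ is compact metrizable, $f\colon X\to K$ is a continuous closed surjection, $B$ is a union of fibres of $f$ (it is measurable with respect to $\sigma(\{\mathrm{coz}(f_n)\})$), and consequently $B=f^{-1}(B_0)$ with $B_0:=f(B)$ a Borel subset of $K$.

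\noindent\emph{Step 1 (the crux).} I would show that $B_0$ is an $H$-set in $K$. This is the one nontrivial point, because continuous images of $H$-sets need not be $H$-sets; what makes it go through is that $f$ is a closed map and $B$ is $f$-saturated. Suppose $B_0$ is not an $H$-set; by the characterization recalled above there is a nonempty closed $M\subseteq K$ in which $B_0\cap M$ and $M\setminus B_0$ are both dense. Put $N=f^{-1}(M)$ and let $(N_\xi)$ be the canonical Hausdorff derivative of the $H$-set $B\cap N$ in $N$, i.e.\ $N_0=N$, $N_{\xi+1}=\overline{B\cap N_\xi}^{\,N_\xi}\cap\overline{N_\xi\setminus B}^{\,N_\xi}$, $N_\lambda=\bigcap_{\xi<\lambda}N_\xi$; since $B\cap N$ is an $H$-set in $N$, this derivative reaches $\emptyset$ at some ordinal $\alpha$ (each step is proper while the current set is nonempty, and it cannot stabilize at a nonempty set). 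I would then prove by transfinite induction that $f(N_\xi)=M$ for all $\xi$. At a successor step, if $f(N_{\xi+1})\subsetneq M$, choose $z\in(M\setminus f(N_{\xi+1}))\cap B_0$ (possible by density); since $B$ is $f$-saturated and $z\in B_0$, the nonempty fibre $f^{-1}(z)\cap N_\xi$ lies in $B$ and misses $N_{\xi+1}$, hence it lies in $O_\xi:=N_\xi\setminus\overline{N_\xi\setminus B}^{\,N_\xi}$, which is relatively open in $N_\xi$ and contained in $B$; then $M\setminus f(N_\xi\setminus O_\xi)$ is relatively open in $M$ (here $f$ closed is used), nonempty since $z$ lies in it, and contained in $B_0$, contradicting the density of $M\setminus B_0$. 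Limit steps use that $f$ is closed with compact fibres. Taking $\xi=\alpha$ yields $\emptyset=f(N_\alpha)=M$, a contradiction.

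\noindent\emph{Step 2 (countable derivative in $K$ and pull-back).} Since $K$ is compact metrizable it is hereditarily Lindel\"of, so it admits no strictly decreasing $\omega_1$-chain of closed sets; hence the canonical derivative $(E^\xi)$ of the $H$-set $B_0$ in $K$ reaches $\emptyset$ at some countable $\gamma$. For $\xi<\gamma$ the set $B_0\cap(E^\xi\setminus E^{\xi+1})=\mathrm{int}_{E^\xi}(B_0\cap E^\xi)$ is relatively open in $E^\xi$, so it equals $W_\xi\cap E^\xi$ for some cozero set $W_\xi\subseteq K$ (cozero because $K$ is metrizable), and the same holds with $B_0$ replaced by $K\setminus B_0$ since $(E^\xi)$ is the derivative of both. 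Now $\widetilde E^\xi:=f^{-1}(E^\xi)$ are zero sets in $X$ (preimages of closed, hence zero, subsets of the metrizable $K$), they decrease from $X$ to $\emptyset$, and
\[
B=f^{-1}(B_0)=\bigcup_{\xi<\gamma}\bigl(B\cap(\widetilde E^\xi\setminus\widetilde E^{\xi+1})\bigr)=\bigcup_{\xi<\gamma}\bigl(f^{-1}(W_\xi)\cap\widetilde E^\xi\bigr),
\]
a countable union of intersections of a cozero set with a zero set of $X$, so $B$ is functionally $F_\sigma$ in $X$. Running the same computation for $K\setminus B_0$ shows that $X\setminus B=f^{-1}(K\setminus B_0)$ is functionally $F_\sigma$ as well, hence $B$ is functionally $G_\delta$; therefore $B$ is functionally ambiguous.

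The only delicate point is Step 1 — that $B_0$ is an $H$-set in the metric quotient; once that is in place, the remainder is the classical Hausdorff analysis of resolvable sets in hereditarily Lindel\"of spaces together with a routine pull-back along $f$.
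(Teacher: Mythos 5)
Your proposal is correct, and its skeleton coincides with the paper's: both arguments push $B$ forward along the diagonal map $f=(f_n)_{n\in\omega}$ to the compact metrizable image $K=f(X)$, identify the crux as showing that the saturated image $f(B)$ is an $H$-set there, and then pull the resulting ambiguity back through $f^{-1}$. Where you genuinely diverge is in the proof of the crux. The paper sets $X'=f^{-1}(Y')$ and uses Zorn's Lemma to extract a closed nonempty $Z\subseteq X'$ on which $f$ restricts to an \emph{irreducible} map; since preimages of dense sets under irreducible maps are dense, the assumed two-sided density of $f(B)$ in $Y'$ lifts to $\overline{Z\cap B}=\overline{Z\setminus B}=Z$, contradicting the $H$-set property of $B$. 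You instead run a transfinite induction on the Hausdorff derivative of $B$ inside $f^{-1}(M)$, using the $f$-saturation of $B$, the closedness of $f$, and compactness of fibres at limit stages to show that every derived set still maps onto $M$ --- absurd once the derivative empties out. Both routes are sound; the paper's is shorter and delegates the combinatorial content to the standard irreducible-map fact, while yours is more self-contained and makes explicit exactly where saturation and closedness of $f$ are used. Similarly, in your Step 2 you reprove, via the countable derivative in the hereditarily Lindel\"{o}f space $K$, the statement that the paper simply cites from Kuratowski (\S 30, X, Theorem 5) --- that a Borel $H$-set in a compact metrizable space is ambiguous --- and you note correctly that pulling back cozero and zero sets of the metrizable image yields \emph{functional} $F_\sigma$/$G_\delta$ representations in $X$. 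The net effect is a longer but fully self-contained proof of the same lemma.
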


\begin{proof}
  Since $B$ is functionally Borel measurable, there exists a sequence  $(f_n)_{n\in\omega}$ of continuous functions $f_n:X\to[0,1]$ such that   $B$ belongs to the  $\sigma$-algebra generated by the system of sets  $\{f_n^{-1}(0):n\in\omega\}$. We consider a continuous map   $f:X\to [0,1]^\omega$, $f(x)=(f_n(x))_{n\in\omega}$ for all $x\in X$, and a compact metrizable space   $Y=f(X)\subseteq [0,1]^\omega$.

We show that the set  $B'=f(B)$ is an H-set in  $Y$. Suppose to the contrary that there is a closed nonempty set   $Y'$ in $Y$ such that  $\overline{Y'\cap B'}=\overline{Y'\setminus B'}=Y'$. We put   $X'=f^{-1}(Y')$ and $g=f|_{X'}$. Since $X'$ is a compact space and $f(X')=Y'$, we apply Zorn's Lemma and find a closed nonempty set  $Z\subseteq X'$ such that the restriction   $g|_Z:Z\to Y'$ of the continuous map $g:X'\to Y'$ is irreducible.
Keeping in mind  that the preimage of any everywhere dense set remains everywhere dense under an irreducible map, we obtain that
$$
\overline{g^{-1}(Y'\cap B')}=\overline{g^{-1}(Y'\setminus B')}=Z=\overline{Z\cap B}=\overline{Z\setminus B},
$$
which contradicts to resolvability of  $B$.

By \cite[\S 30, X, Theorem 5]{Kur:Top:Rus1} the set   $f(B)$ is $F_\sigma$ and $G_\delta$ in a compact metrizable space  $Y$. Since  $B=f^{-1}(f(B))$ and $f$ is continuous, we have that   $B$ is functionally ambiguous subset of   $X$.
\end{proof}

\begin{proposition}\label{prop:b1star}
  Let $X$ be a countable hereditarily irresolvable completely regular space. Then   $X$ is ${\rm B}_1^*$-embedded in $\beta X$.
\end{proposition}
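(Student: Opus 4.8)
The plan is to fix $f\in{\rm B}_1^*(X)$ and, after an affine normalization, regard it as a map $f\colon X\to[0,1]$; the required $g\in{\rm B}_1(\beta X)$ with $g|_X=f$ will be produced as the uniform limit on $\beta X$ of an explicitly built sequence of Baire-one step functions. Two cheap structural facts underpin the construction. First, $X$ is countable and $T_1$, hence Lindel\"of and regular, hence normal; since every subset of $X$ is a countable union of closed singletons it is $F_\sigma$, hence also $G_\delta$, so $X$ is perfectly normal and therefore every closed subset of $X$ is a zero-set. Together with Lemma~\ref{prop:resofirres} this shows that \emph{every} subset $A\subseteq X$ is an H-set whose witnessing decreasing chain of closed sets consists of zero-sets; in other words, every subset of $X$ is a functionally H-set in $X$. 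Second, $X$ is $C^*$-embedded in $\beta X$ and hence $z$-embedded in $\beta X$, since a zero-set $Z(\varphi)$ of $X$ equals $Z(\widehat\varphi)\cap X$ for any continuous extension $\widehat\varphi\colon\beta X\to[0,1]$ of $\varphi$.

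I would then construct recursively in $n\in\omega$ partitions $\{B^n_k:0\le k<2^n\}$ of $\beta X$ into functionally ambiguous sets with $B^n_k\cap X=A^n_k$, where $A^n_k:=f^{-1}(I^n_k)$ and the $I^n_k$ are the half-open dyadic subintervals of length $2^{-n}$ partitioning $[0,1]$, arranged so that the level-$(n+1)$ partition refines the level-$n$ one compatibly with $I^n_k=I^{n+1}_{2k}\cup I^{n+1}_{2k+1}$; one starts with $B^0_0=\beta X$. In the recursion step, $A^{n+1}_{2k}\subseteq A^n_k$ is a functionally H-set in $X$ by the first paragraph, so Lemma~\ref{prop:ext:restofsigma} (with $E=X$) yields a functionally $F_\sigma$ H-set $\widetilde D\subseteq\beta X$ with $\widetilde D\cap X=A^{n+1}_{2k}$, and since $\beta X$ is compact and $\widetilde D$ is a functionally Borel H-set, Lemma~\ref{prop:resincomp} makes $\widetilde D$ functionally ambiguous in $\beta X$. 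Setting $B^{n+1}_{2k}:=\widetilde D\cap B^n_k$ and $B^{n+1}_{2k+1}:=B^n_k\setminus\widetilde D$ keeps both pieces functionally ambiguous (these sets are closed under finite Boolean operations), partitions $B^n_k$, and, because the $A^n_j$ partition $X$, gives $B^{n+1}_{2k}\cap X=A^{n+1}_{2k}$ and $B^{n+1}_{2k+1}\cap X=A^{n+1}_{2k+1}$.

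Finally I would put $g_n:=\sum_{k<2^n}k2^{-n}\chi_{B^n_k}\colon\beta X\to[0,1]$. Each $g_n$ is Baire-one, because the characteristic function of a functionally ambiguous set is Baire-one and ${\rm B}_1(\beta X)$ is closed under finite sums. The refinement property forces $\|g_{n+1}-g_n\|_\infty\le 2^{-(n+1)}$ on all of $\beta X$, so $(g_n)$ converges uniformly to some $g\colon\beta X\to[0,1]$, and $g\in{\rm B}_1(\beta X)$ as a uniform limit of Baire-one functions. If $x\in X$ then $x\in A^n_k$ with $f(x)\in I^n_k$, so $|g_n(x)-f(x)|\le 2^{-n}$; hence $g|_X=f$. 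As $f\in{\rm B}_1^*(X)$ was arbitrary, $X$ is ${\rm B}_1^*$-embedded in $\beta X$.

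I do not anticipate a deep difficulty. The crux is the two structural reductions of the first paragraph — that in the countable space $X$ every closed set is a zero-set (so that ``H-set'' may be read as ``functionally H-set'', which is exactly what Lemma~\ref{prop:ext:restofsigma} requires) and that $X$ is $z$-embedded in $\beta X$ — after which Lemmas~\ref{prop:resofirres}, \ref{prop:ext:restofsigma} and~\ref{prop:resincomp} do nearly all the work. The single point that needs care is making the successive extensions \emph{refine} one another, so that the approximating step functions converge uniformly on all of $\beta X$ and not merely on $X$; this is the purpose of intersecting each new extension $\widetilde D$ with the old cell $B^n_k$.
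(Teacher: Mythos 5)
Your proof is correct, and its core is exactly the paper's: reduce to the fact that every subset $A\subseteq X$ is a functionally H-set (perfect normality of the countable space $X$ plus Lemma~\ref{prop:resofirres}), push $A$ out to a functionally $F_\sigma$ H-set in $\beta X$ via Lemma~\ref{prop:ext:restofsigma} (using that $X$ is $z$-embedded in $\beta X$), and upgrade the extension to a functionally ambiguous set via Lemma~\ref{prop:resincomp}. Where you genuinely diverge is in the last step. The paper stops once it knows that every subset of $X$ extends to a functionally ambiguous subset of $\beta X$ and invokes the characterization of ${\rm B}_1^*$-embedded sets from \cite[Proposition~5.1]{Karlova:CMUC:2013} to conclude. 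You instead unfold that citation into a self-contained argument: the refining dyadic partitions $\{B^n_k\}$ of $\beta X$ traced onto the preimage partitions $f^{-1}(I^n_k)$, the Baire-one step functions $g_n$, and the uniform limit. This costs you the bookkeeping of making successive extensions refine one another (correctly handled by intersecting each new $\widetilde D$ with the old cell, and noting that functionally ambiguous sets are closed under finite Boolean operations), but buys a proof that does not depend on the external characterization — in effect you reprove the relevant direction of that proposition in this special case. Both arguments rest on the same three lemmas and the same two structural observations about $X$; yours is longer but more transparent about why extendability of functionally ambiguous sets yields extendability of bounded Baire-one functions.
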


\begin{proof}
Since  $X$ is countable and completely regular, it is perfectly normal. Therefore, every subsets of $X$ is functionally ambiguous.

Fix an arbitrary $A\subseteq X$. By Lemma~\ref{prop:resofirres} the set  $A$ is an $H$-set. We apply Lemma~\ref{prop:ext:restofsigma} and fine a functionally $H$-set $B\subseteq\beta X$ such that $B$ is   $F_\sigma$ and $B\cap X=A$. Notice that $B$ is functionally ambiguous by Lemma~\ref{prop:resincomp}. Hence, $B$ is a ${\rm B}_1^*$-embedded subspace of $\beta X$ according to~\cite[Proposition 5.1]{Karlova:CMUC:2013}.
\end{proof}

Let us observe that examples of countable hereditarily irresolvable completely regular spaces can be found, for instance, in~\cite[p. 536]{Tk}.

\begin{proposition}\label{prop:notb1}
  Let $X$ be a countable completely regular space without isolated points. Then  $X$  is not  ${\rm B}_1$-embedded in  $\beta X$.
\end{proposition}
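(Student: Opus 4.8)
The plan is to exhibit a bounded Baire-one function on $X$ that has no Baire-one extension to $\beta X$. Since $X$ is countable and completely regular without isolated points, $X$ is a countable crowded completely regular space; enumerate $X=\{x_n:n\in\omega\}$. The natural candidate is a function recording the enumeration, for instance $f:X\to[0,1]$ with $f(x_n)=2^{-n}$, or even a suitable bijection of $X$ onto a subset of $[0,1]$. Every function on $X$ is of the first Baire class here: indeed $X$ is countable and perfectly normal, so every subset of $X$ is functionally ambiguous, hence every function $X\to\mathbb R$ is Baire-one (a routine consequence, e.g. one builds the approximating continuous sequence using the ambiguity of the level sets). In particular $f\in {\rm B}_1^*(X)$, so the interesting content is purely about the target space $\beta X$.

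The key step is to show no $g\in {\rm B}_1(\beta X)$ restricts to $f$. I would argue by contradiction: suppose such $g$ exists. The point of the argument is that $\beta X$ is a compact space in which $X$ sits as a dense subset, and a Baire-one function on a compact space has a point of continuity on every closed subset (it is fragmented, or more precisely its set of continuity points is dense $G_\delta$ — the Baire–Osgood type theorem, valid since $\beta X$ is compact Hausdorff, hence Baire). Applying this to $\beta X$ itself, $g$ has a point $p$ of continuity; since $X$ is dense in $\beta X$, there is a net (or, using that points of continuity form a dense set and working locally, a sequence from $X$, but compact Hausdorff spaces need not be first countable so I would phrase it with nets or with oscillation) along which $g$ is well-controlled near $p$. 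The contradiction comes from the fact that $f$ itself is wildly discontinuous on the crowded space $X$: because $X$ has no isolated points, the values $f(x_n)=2^{-n}$ force every nonempty open subset of $X$ to contain points with $f$-values spread over a set that does not shrink to a point — so $f$ is nowhere fragmented on $X$ in the strong sense that $g$ cannot be continuous at any point of $\overline{X}=\beta X$.

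More carefully, here is the mechanism I expect to use. For each $n$ the singleton $\{x_n\}$ is not open in $X$, so $x_n\in\overline{X\setminus\{x_0,\dots,x_n\}}$, and hence in $\beta X$ every neighbourhood of $x_n$ meets $\{x_k:k>n\}$, i.e. meets $f^{-1}((0,2^{-n}))$. Thus $0$ lies in the closure (in $\mathbb R$) of $g(U)$ for every neighbourhood $U$ of $x_n$ in $\beta X$; but $g(x_n)=f(x_n)=2^{-n}>0$, so $g$ is discontinuous at every point $x_n$. Since the oscillation of a Baire-one function on a Baire space (here compact Hausdorff $\beta X$) is zero on a dense $G_\delta$ set, the set $C$ of continuity points of $g$ is a dense $G_\delta$ in $\beta X$ disjoint from the dense set $X$. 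Two disjoint dense $G_\delta$ sets in a Baire space have nonempty (in fact dense) intersection — contradiction. Hence no Baire-one extension exists, and $X$ is not ${\rm B}_1$-embedded in $\beta X$.

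The main obstacle I anticipate is the first-countability issue: I cannot pass to sequences in $\beta X$, so the cleanest route is to avoid sequences in the target entirely and work with the oscillation function $\omega_g(x)=\inf_{U\ni x}\operatorname{diam}g(U)$ of $g$ on $\beta X$, invoking the classical fact that for $g\in{\rm B}_1(\beta X)$ the set $\{x:\omega_g(x)\ge\varepsilon\}$ is closed and nowhere dense for each $\varepsilon>0$ (which uses that $\beta X$ is Baire). The computation above shows $\omega_g(x_n)\ge 2^{-n}$ for each $n$, and one must then check that this, together with density of $X$, is genuinely incompatible with $\bigcup_m\{x:\omega_g(x)\ge 1/m\}$ being meagre in $\beta X$ — the subtle point being that the $x_n$ are spread across different levels $\varepsilon=2^{-n}$, so I would instead fix attention on a closed subset: take any nonempty closed $F\subseteq\overline{\{x_n:n\ge N\}}\subseteq\beta X$ for large $N$; on it $g$ takes the value $f(x_n)\le 2^{-N}$ on a dense subset while every point has $0$ in the closure of nearby values only if that point is some $x_n$, so $g$ is continuous at no point of $F\cap X$ — and arranging $F$ so that $F\cap X$ is still dense in $F$ (possible since $X$ is dense in $\beta X$) contradicts fragmentability of $g$ on $F$. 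I would devote the bulk of the write-up to making this last closed-set argument precise.
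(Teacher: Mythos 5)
Your proposal has a genuine gap, and in fact the strategy of exhibiting a \emph{bounded} witness cannot work. First, the claimed contradiction --- that the dense $G_\delta$ set $C$ of continuity points of $g$ cannot be disjoint from the dense set $X$ --- is false: $X$ is countable and none of its points is isolated in $\beta X$ (since $X$ is crowded and dense), so $\beta X\setminus X=\bigcap_{n}(\beta X\setminus\{x_n\})$ is itself a dense $G_\delta$ in the Baire space $\beta X$; dense $G_\delta$ sets disjoint from $X$ therefore exist in abundance. ($X$ is dense but need not be $G_\delta$ --- whether it is $G_\delta$ is precisely the crux of the matter.) Your proposed repair via non-fragmentability on a closed set also fails: the oscillation of $g$ at $x_n$ is only bounded below by $2^{-n}$, which tends to $0$, so no single $\varepsilon>0$ and nonempty closed $F\subseteq\beta X$ can witness non-fragmentability; indeed $f(x_n)=2^{-n}$ is itself fragmented on $X$ (on any crowded closed $F$ discard the finitely many $x_n$ with $2^{-n}\ge\varepsilon$; being finite, they are not dense in $F$). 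Most decisively, Proposition~\ref{prop:b1star} shows that when $X$ is in addition hereditarily irresolvable --- exactly the case needed for the paper's main theorem --- $X$ is ${\rm B}_1^*$-embedded in $\beta X$, so \emph{every} bounded Baire-one function on $X$, your $f$ included, does extend to a Baire-one function on $\beta X$. The obstruction to ${\rm B}_1$-embedding must therefore come from an unbounded function or from a structural argument; this bounded/unbounded dichotomy is the entire point of the paper.

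The paper's own proof is structural and short: $X$, being countable and completely regular, is a functionally $F_\sigma$ subset of $\beta X$; if $X$ were ${\rm B}_1$-embedded, then by \cite[Proposition 8(iii)]{Ka} there would exist $h\in{\rm B}_1(\beta X)$ with $X\subseteq h^{-1}(0)$ and $\beta X\setminus X\subseteq h^{-1}(1)$, which forces $X$ to be a $G_\delta$ in the compact space $\beta X$, hence a Baire space --- contradicting the fact that a countable crowded space is of the first category in itself. If you want to salvage a hands-on argument, aim at this $G_\delta$/Baire-category obstruction (which is what an unbounded non-extendable function encodes) rather than at fragmentability of a bounded witness.
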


\begin{proof} Observe that  $X$ is functionally   $F_\sigma$-subset of   $\beta X$. Now assume that  $X$ is    ${\rm B}_1$-embedded in $\beta X$.
 According to \cite[Proposition 8(iii)]{Ka} there should be a function  $f\in{\rm B}_1(\beta X)$ such that  $X\subseteq f^{-1}(0)$ and $\beta X\setminus X\subseteq f^{-1}(1)$. Then the set $X$ is  $G_\delta$ in  $\beta X$. Therefore, $X$ is a Baire space, which implies a contradiction, since $X$ is of the first category in itself.
\end{proof}

Propositions~\ref{prop:b1star} and \ref{prop:notb1} imply the following fact.

\begin{theorem}
  Let $X$ be a countable hereditarily irresolvable completely regular space without isolated points. Then $X$ is ${\rm B}_1^*$-embedded in $\beta X$ and is not ${\rm B}_1$-embedded in $\beta X$.
\end{theorem}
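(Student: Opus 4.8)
The plan is to assemble the final theorem directly from the two preceding propositions, which already do all the work; the only thing to verify is that the hypotheses of the theorem supply the hypotheses needed for each of Propositions~\ref{prop:b1star} and~\ref{prop:notb1}. So first I would observe that a countable hereditarily irresolvable completely regular space without isolated points is in particular a countable hereditarily irresolvable completely regular space, so Proposition~\ref{prop:b1star} applies verbatim and yields that $X$ is ${\rm B}_1^*$-embedded in $\beta X$.

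Next I would note that such an $X$ is also a countable completely regular space without isolated points, which is exactly the hypothesis of Proposition~\ref{prop:notb1}; hence $X$ is not ${\rm B}_1$-embedded in $\beta X$. Combining the two conclusions gives the theorem. Since both propositions are already proved in the excerpt, there is essentially no obstacle here — the proof is a one-line citation of the two results. If I wanted to make the note self-contained at this point, I would briefly recall why the hypothesis ``without isolated points'' is not vacuous together with ``hereditarily irresolvable'': by the remark after Proposition~\ref{prop:b1star}, suitable crowded countable hereditarily irresolvable completely regular spaces exist (e.g.\ the example on p.~536 of~\cite{Tk}), so the theorem is not empty.

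The only place where one has to be slightly careful is in checking that ``crowded'' is genuinely needed for the second half and not for the first: Proposition~\ref{prop:b1star} does not require the absence of isolated points, but Proposition~\ref{prop:notb1} does (it is used, via first category of $X$ in itself, to derive the contradiction from $X$ being $G_\delta$ in $\beta X$). Thus the hypothesis set in the theorem is exactly the union of the hypotheses of the two propositions, and no new argument is required. I would therefore write the proof simply as: ``This follows immediately from Propositions~\ref{prop:b1star} and~\ref{prop:notb1}.''
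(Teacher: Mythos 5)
Your proposal is correct and matches the paper exactly: the paper states the theorem immediately after Propositions~\ref{prop:b1star} and~\ref{prop:notb1} with the remark that they imply it, which is precisely your one-line citation. Your additional check that the theorem's hypotheses are the union of those of the two propositions is accurate and requires no further argument.
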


\section{Acknowledgements}

The first author is partially supported by a grant of Chernivtsi National University for young scientists in 2018 year.

{\small

}


\begin{thebibliography}{9}
\bibitem{Ced} Ceder J. \emph{On maximally Borel resolvable spaces}, Revue Roumaine de Mathematiques Pures
and Appl. 11 (1966), 89–-94.


\bibitem{JM} Jimenez R., Malykhin V. \emph{Structure resolvability}, Comment. Math. Univ. Carolin. {\bf 39} (2) (1998),  379--387.




\bibitem{Ka} Kalenda O., Spurn\'{y} J. {\it Extending Baire-one functions on topological spaces},  Topol. Appl. {\bf 149} (1-3)  (2005), 195--216.


\bibitem{Karlova:CMUC:2013} Karlova O. {\it On $\alpha$-embedded sets and extension of mappings}, Comment. Math. Univ. Carolin. {\bf 54} (3) (2013), 377-396.



\bibitem{Kur:Top:Rus1}  Kuratowski K. {\it Topology. Volume I}, Academic Press (1966).


\bibitem{Tk} Tkachuk V. {\it A $C_p$-Theory Problem Book: Special Features of Function Spaces}, Problem
Books in Mathematics, Springer International Publishing Switzerland, (2014).
\end{thebibliography}
\end{document}